\theoremstyle{plain}
\newtheorem{thm}{Theorem}
\theoremstyle{definition}
\newtheorem{ex}{Example}
\theoremstyle{remark}
\newtheorem*{ackn}{Acknowledgment}
\newcommand{\C}{\mathbb{C}}
\newcommand{\R}{\mathbb{R}}
\newcommand{\N}{\mathbb{N}}
\title{Moment functions of higher rank on polynomial hypergroups}
\author{{\.Z}ywilla Fechner, Eszter Gselmann and L\'{a}szl\'{o} Sz\'ekelyhidi}
\begin{document}
\nocite{*}

\maketitle

\begin{abstract}
In this paper we consider generalized moment functions of higher order. These functions are closely related to the well-known functions of binomial type which have been investigated on various abstract structures. In our former paper we investigated the properties of generalized moment functions of higher order on commutative groups. In particular, we proved the characterization of generalized moment functions on a commutative group as the product of an exponential and composition of multivariate Bell polynomial and a sequence additive functions. In the present paper we continue the study of generalized moment function sequences of higher order in the more abstract setting, namely we consider functions defined on a hypergroup. We characterize these functions on the polynomial hypergroup in one variable by means of partial derivatives of a composition of polynomials generating the polynomial hypergroup and an analytic  function. As an example, we give an explicit formula for moment generating functions of rank at most two on the Tchebyshev hypergroup.
\end{abstract}

\section{Introduction}\label{intro}

Let $G$ be a locally  compact  Abelian topological group. Recall that a nonzero continuous function $m\colon G\to \C$ is called {\it exponential}, if 
$$
m(x+y)=m(x)m(y)
$$ 
holds for all $x,y$ in $G$. 
\vskip.2cm

Exponentials can be considered as "generalized moment functions" of order zero. Indeed, a sequence $\varphi_n:G\to \C$ of continuous functions is called a {\it generalized moment function sequence}, if $\varphi_0(0)=1$ and
\begin{equation}
\varphi_n(x+y)=\sum_{k=0}^n \binom{n}{k} \varphi_k(x)\varphi_{n-k}(y)
\label{mom1}
\end{equation} 
holds for all $x,y$ in $G$ and $n$ in $\N$. 
In this case $\varphi_0$ is an arbitrary exponential function and  the function $\varphi_k$ is a {\it generalized moment function of order} $k$. 
\vskip.2cm

Equation \eqref{mom1} is closely related to the well-known functions of binomial type. 
A detailed discussion about binomial type equations in abstract setting, which has been the motivation for the present research, can be found in \cite{MR440237}, where it was shown that if $G$ is a grupoid and $R$ is a commutative ring, then functions $\varphi_n\colon G\to R$ satisfying \eqref{mom1} for each $n$ in $\N$ are of the form
\begin{equation}
\varphi_n(t)=n!\sum_{j_1+2j_2+\dots+nj_n=n}\prod_{k=1}^n \frac{1}{j_k!}\left(\frac{a_k(t)}{k!}  \right)^{j_k}
\label{eq:AczelPolySol}
\end{equation}
for all $t$ in $G$ and $k$ in $\N$ with arbitrary homomorphisms $a_k$ from $G$ into $R$.
\vskip.2cm

The system of functional equations \eqref{mom1}, as well as the concept of moment functions can be and has been generalized for commutative hypergroups. Formally, the group operation $+$ in \eqref{mom1} is replaced by $*$, the convolution defined on the commutative hypergroup $X$, and we obtain the system of equations
\begin{equation}
	\varphi_n(x*y)=\sum_{k=0}^n \binom{n}{k} \varphi_k(x)\varphi_{n-k}(y),
	\label{mom2}
\end{equation} 
where $x,y$ are in $X$, and $n=0,1,2,\dots$. We also assume that $\varphi_0(o)=1$, where $o$ is the identity of the hypergroup $X$.  We recall that in the hypergroup setting equation \eqref{mom2} is a system of integral equations of the following form:
$$
\int_X \varphi_n(t)\,d(\delta_x*\delta_y)(t)=\sum_{k=0}^n \binom{n}{k} \varphi_k(x)\varphi_{n-k}(y).
$$
For more details see \cite{BloHey95} or \cite{MR2978690}. The functions $\varphi_n$ are supposed to be continuous.

\section{Moment functions of higher rank}

Further generalization is available in the following way. 
Let $X$ be a commutative hypergroup, $r$ a positive integer, and for each multi-index $\alpha$ in $\N^r$ let $\varphi_{\alpha}:X\to \C$ be a continuous function. We say that $(\varphi_{\alpha})_{\alpha \in \mathbb{N}^{r}}$ is a \emph{generalized 
		moment sequence of rank $r$}, if 
	\begin{equation}\label{Eq3}
		\varphi_{\alpha}(x*y)=\sum_{\beta\leq \alpha} \binom{\alpha}{\beta} \varphi_{\beta}(x)\varphi_{\alpha-\beta}(y)
	\end{equation}
holds whenever $x,y$ are in $X$. We may consider finite sequences as well, if we restrict $|\alpha|\leq N$ with some nonnegative integer $N$.

For simplicity we use the term {\it moment sequence} omitting the adjective "generalized". We call a continuous function $\varphi:X\to \C$ a {\it moment function}, if there is a positive integer $r$, a moment sequence $(\varphi_{\alpha})_{\alpha \in \mathbb{N}^{r}}$  of rank $r$, and a multi-index $\alpha$ in $\N^r$ such that $\varphi=\varphi_{\alpha}$. We recall that, using multi-indeces, besides the usual vector-notation for the basic operations we use the following notation: for $\alpha=(\alpha_1,\alpha_2,\dots,\alpha_r)$ and $\beta=(\beta_1,\beta_2,\dots,\beta_r)$ in $\N^r$ we shall write
$\alpha\leq \beta\enskip\text{whenever}\enskip \alpha_i\leq \beta_i\enskip\text{for} \enskip i=1,2,\dots r$, and $\alpha<\beta\enskip\text{whenever}\enskip \alpha\leq\beta\enskip\text{and}\enskip \alpha\ne \beta$. Further, we use the notations
$$
|\alpha|=\alpha_1+\alpha_2+\cdots+\alpha_r,\hskip1cm \alpha!=\alpha_1!\cdot \alpha_2!\cdots\alpha_r!,
$$
$$
\binom{\alpha}{\beta}=\frac{\alpha!}{\beta! \cdot (\alpha-\beta)!},\hskip1cm x^{\alpha}=x_1^{\alpha_1}\cdot x_2^{\alpha_2}\cdot\dots\cdot x_r^{\alpha_r}.
$$
If there is no misunderstanding, the zero of $\N^r$ will be denoted by $0$ instead of $(0,0,\dots,0)$. 

In our former paper \cite{FecGseSze20} we have investigated moment function sequences of higher rank on groups. For their description we used Bell polynomials (see the definition in \cite{FecGseSze20}). We proved the following result (see Proposition 3. and Theorem 2. in \cite{FecGseSze20}):

\begin{thm}
Let $G$ be a commutative group, $r$ a positive integer, and for each $\alpha$ multi-index in $\N^r$. The functions $f_{\alpha} : G\mapsto \C$  form a generalized moment sequence of rank $r$ if and only if there exists an exponential \hbox{$m:G\mapsto \C$} and a family of complex-valued additive functions $a=(a_{\alpha})$ such that for every multi-index $\alpha$ in $\N^r$ and $x$ in $G$ we have
	$$
	f_{\alpha}(x)=B_{\alpha}\big(a_{\alpha}(x)\big)m(x).
	$$
\end{thm}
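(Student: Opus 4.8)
The plan is to encode the whole sequence into one generating function and turn the system \eqref{Eq3} into a multiplicative functional equation, after which the exponential formula for the multivariate Bell polynomials does the rest. For the \emph{sufficiency} direction, suppose $f_\alpha(x)=B_\alpha\big(a_\alpha(x)\big)m(x)$ with $m$ an exponential and the family $(a_\beta)$ additive. I would first note that multiplying a moment sequence termwise by a fixed exponential again produces a moment sequence, since the factor $m(x)m(y)=m(x+y)$ splits cleanly across the binomial convolution in \eqref{Eq3}; hence it suffices to treat the case $m\equiv 1$. For this, introduce the formal power series $A(x,t)=\sum_{\beta\neq 0}a_\beta(x)\,t^\beta/\beta!$ in the indeterminates $t=(t_1,\dots,t_r)$ and set $F(x,t)=\exp A(x,t)$. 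By the defining property of the complete multivariate Bell polynomials the coefficient of $t^\alpha/\alpha!$ in $F(x,t)$ is exactly $B_\alpha\big(a_\alpha(x)\big)$; and since each $a_\beta$ is additive we get $A(x+y,t)=A(x,t)+A(y,t)$, hence $F(x+y,t)=F(x,t)F(y,t)$. Comparing the coefficients of $t^\alpha/\alpha!$ on both sides yields precisely \eqref{Eq3}.

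For the \emph{necessity} direction, let $(f_\alpha)$ be a moment sequence of rank $r$. Putting $\alpha=0$ in \eqref{Eq3} shows that $m:=f_0$ is an exponential, in particular nowhere vanishing, so I may pass to the normalized sequence $g_\alpha:=f_\alpha/m$; a short computation using $m(x+y)=m(x)m(y)$ shows $(g_\alpha)$ is again a moment sequence with $g_0\equiv 1$. Form the generating series $F(x,t)=\sum_\alpha g_\alpha(x)\,t^\alpha/\alpha!$, whose constant term in $t$ equals $1$; equation \eqref{Eq3} for $(g_\alpha)$ is equivalent to $F(x+y,t)=F(x,t)F(y,t)$. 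Because $F(x,\cdot)$ has constant term $1$, its formal logarithm $A(x,t):=\log F(x,t)$ is a well-defined series with zero constant term, and multiplicativity of $F$ in $x$ becomes additivity, $A(x+y,t)=A(x,t)+A(y,t)$. Writing $A(x,t)=\sum_{\beta\neq 0}a_\beta(x)\,t^\beta/\beta!$ and matching the coefficient of each $t^\beta/\beta!$ shows that every $a_\beta$ is additive. Finally, from $F=\exp A$ the exponential formula for the multivariate Bell polynomials gives $g_\alpha(x)=B_\alpha\big(a_\alpha(x)\big)$, whence $f_\alpha(x)=B_\alpha\big(a_\alpha(x)\big)m(x)$.

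The step that needs the most care is the bookkeeping of the several-variable formal power series and the precise matching of the coefficient extraction with the definition of the $B_\alpha$ adopted in \cite{FecGseSze20}: once the identity $\exp\!\big(\sum_{\beta}a_\beta\,t^\beta/\beta!\big)=\sum_{\alpha}B_\alpha\big(a_\alpha\big)\,t^\alpha/\alpha!$ is established, both directions collapse to comparing coefficients. A secondary point is to justify that the formal $\log$ and $\exp$ act legitimately termwise; this is automatic here, since $F$ has constant term $1$ and $A$ has zero constant term, so only finitely many terms contribute to any fixed multi-index $\alpha$, and no convergence question arises. Continuity of the extracted $a_\beta$ and of $m=f_0$ is inherited from that of the $f_\alpha$, since each $a_\beta$ is a fixed polynomial combination of finitely many $f_\gamma/m$.
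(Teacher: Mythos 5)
Your proof is correct, but note a peculiarity of the comparison you were asked to survive: the present paper does not prove this statement at all. It is quoted as a recollection of the authors' earlier work \cite{FecGseSze20} (Proposition 3 and Theorem 2 there), and the paper immediately moves on to the hypergroup setting, where this product representation fails. So the only thing to assess is your argument on its own terms, and on those terms it holds up. The exponential generating series $F(x,t)=\sum_\alpha f_\alpha(x)\,t^\alpha/\alpha!$ turns the binomial convolution \eqref{Eq3} into multiplicativity $F(x+y,t)=F(x,t)F(y,t)$ of formal series in $t$; the formal $\log$/$\exp$ pair is legitimate exactly as you say, because the constant terms are $1$ and $0$ respectively, so each coefficient is a polynomial in finitely many others; and the identity $\exp\bigl(\sum_{\beta\neq 0}a_\beta t^\beta/\beta!\bigr)=\sum_\alpha B_\alpha(a)\,t^\alpha/\alpha!$ is precisely the generating-function characterization of the multivariate Bell polynomials used in \cite{FecGseSze20}, so both directions do collapse to coefficient comparison. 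Whether this coincides with the route taken in \cite{FecGseSze20} cannot be judged from the present text; if that proof runs by induction on $|\alpha|$ through the Bell-polynomial recursion, your version is the more compact and transparent of the two.

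One point you should make explicit rather than implicit: the rank-$r$ definition \eqref{Eq3}, as stated in this paper, carries no normalization, and the identically zero sequence satisfies \eqref{Eq3} while admitting no representation $B_{\alpha}\bigl(a_{\alpha}(x)\bigr)m(x)$, since $B_0=1$ would force $f_0=m$ to be a nonzero exponential. Your step ``$f_0$ is an exponential, in particular nowhere vanishing'' therefore silently invokes the convention, used throughout the paper (the requirement $\varphi_0(0)=1$ in Section~\ref{intro}, and the sentence ``As $\varphi_0$ is an exponential'' in the proof of Theorem~\ref{r}), that $\varphi_0$ is a nonzero exponential. Once that convention is stated, the rest is watertight: $f_0\not\equiv 0$ together with multiplicativity gives $f_0(0)=1$ and $f_0(x)f_0(-x)=1$, so $f_0$ never vanishes, the normalization $g_\alpha=f_\alpha/f_0$ is legitimate, and your extraction of the additive functions $a_\beta$ (and their continuity, in the topological setting, as polynomials in finitely many $f_\gamma/f_0$) goes through as written.
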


It is reasonable to ask if a similar description of moment functions of higher rank is available in the hypergroup case.  The answer is negative: due to the meaning of the symbol $f(x*y)$ as an integral products of functions satisfying simple functional equations will not preserve their  properties arising from these equations. For instance, products of exponentials on hypergroups is, in general, not an exponential. In the subsequent  paragraphs we will show that still the idea of multiplying exponentials by Bell polynomials can be replaced by application of  some appropriate differential operators.

\section{Main results}

We begin with recalling the definition of a polynomial hypergroup in one variable. Let $(a_n)_{n\in \N}$, $(b_n)_{n\in \N}$ and $(c_n)_{n\in \N}$ be real sequences with the following properties
$$ c_n>0,\quad b_n\geq 0, \quad a_{n+1}>0 $$
for each $n$ in $\N$. Moreover, $a_0=b_0=0$ and 
$$ 
a_n+b_n+c_n=1
$$
for each $n$ in $\N$. We define the sequence of polynomials $(P_n)_{n\in\N}$ by the formulas $P_0(\lambda)=1$, $P_1(\lambda)=\lambda$ and

$$\lambda P_n(\lambda)=a_nP_{n-1}(\lambda)+b_nP_n(\lambda)+c_nP_{n+1}(\lambda)$$
for each $n\geq 1$ and $\lambda$ in $\R$.
One can show that for each $k,m,n$ in $\N$ there exist constnts $c(n,m,k)$ such that
\begin{equation}\label{eq:LinFormula}
P_n\cdot P_m=\sum_{k=|n-m|}^{n+m}c(n,m,k)P_k.
\end{equation}
for each $m,n$ in $\N$.

Formula \eqref{eq:LinFormula} is called linearization formula. One can show that

$$
\sum_{k=|n-m|}^{n+m}c(n,m,k)=1
$$ 
for each $m,n$ in $\N$. 

If $c(n,m,k)\geq 0$ for all $k,m,n$ in $\N$, then on the set $\N$ we can define the hypergroup structure, where the convolution is given by
$$\delta_n*\delta_m= \sum_{k=|n-m|}^{n+m}c(n,m,k)\delta_k.$$

Assume now that $X$ is a polynomial hypergroup generated by the sequence of polynomials $(P_n)_{n\in\N}$ and $r,N$ are positive integers. Our purpose is to describe the general solution of the system of functional equations \eqref{Eq3} for the unknown functions $\varphi_{\alpha}:\N\to\C$ with $|\alpha|\leq N$.
\vskip.2cm

In the case $r=1$ the system \eqref{Eq3} reduces to the system \eqref{mom1} which was solved in \cite{MR2161803} (see also \cite[Theorem 2.5, p. 44]{MR2978690}). We recall the result.

\begin{thm}\label{r1}
Let $X$ be a polynomial hypergroup generated by the sequence of polynomials $(P_n)_{n\in \N}$ and let $N$ be a positive integer. The sequence of functions $\varphi_{k}:\N\to\C$ $(k=0,1,\dots, N)$ is a moment function sequence (of rank $1$) if and only if they have the form
\begin{equation}\label{genformr1} 
	\varphi_k(n)=(P_n\circ f)^{(k)}(0)\hskip .5cm k=0,1,\dots,N; n\in \N,
\end{equation}
where
\begin{equation}\label{r1f}
f(t)=\sum_{j=0}^N \frac{\varphi_j(1)}{j!}t^j\enskip\text{for}\enskip t \enskip\text{in}\enskip \R. 
\end{equation}
\end{thm}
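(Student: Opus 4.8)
The plan is to pass to exponential generating functions in the order variable and to reduce both implications to the linearization formula \eqref{eq:LinFormula}. For each $n\in\N$ I would introduce the polynomial
$$
\Phi_n(t)=\sum_{k=0}^N \frac{\varphi_k(n)}{k!}\,t^k,
$$
so that $\varphi_k(n)=\Phi_n^{(k)}(0)$ for $0\le k\le N$, and I would work throughout in the truncated ring $A=\C[t]/(t^{N+1})$. The decisive observation is that, since the binomial convolution on the right-hand side of \eqref{mom2} is exactly the Cauchy product of exponential generating functions, the whole system \eqref{mom2} for $0\le k\le N$ is equivalent to the single family of congruences
$$
\Phi_n(t)\,\Phi_m(t)\equiv \sum_{l=|n-m|}^{n+m} c(n,m,l)\,\Phi_l(t) \pmod{t^{N+1}}
$$
for all $n,m\in\N$, together with $\Phi_0(0)=1$. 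In other words, $n\mapsto \Phi_n$ is precisely a generalized exponential of the hypergroup $X$ with values in the commutative algebra $A$.

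For the ``if'' direction I would simply set $\Phi_n(t)=P_n(f(t))$ with $f$ as in \eqref{r1f}. Substituting $\lambda=f(t)$ into \eqref{eq:LinFormula} gives the displayed congruence as a genuine polynomial identity, and differentiating $k$ times at $t=0$ (Leibniz on the left, termwise on the right) returns \eqref{mom2}; the normalization $\varphi_0(0)=P_0(f(0))=1$ holds automatically because the identity of the polynomial hypergroup is $o=0$ and $P_0\equiv 1$. The only point to record is that $\varphi_k(n)=(P_n\circ f)^{(k)}(0)$ for $k\le N$ depends only on the Taylor expansion of $P_n\circ f$ up to order $N$, so replacing equalities by congruences modulo $t^{N+1}$ loses nothing and $f$ may be taken to be a polynomial of degree at most $N$.

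For the converse I would show by induction on $n$ that $\Phi_n\equiv P_n(f)\pmod{t^{N+1}}$, where necessarily $f=\Phi_1$ because $P_1(\lambda)=\lambda$, matching \eqref{r1f}. The base case $n=0$ follows from the congruence with $n=m=0$: it reads $\Phi_0^2\equiv\Phi_0$, and since $\Phi_0(0)=\varphi_0(o)=1$ the only such idempotent in the local ring $A$ is $\Phi_0\equiv 1=P_0(f)$ (this incidentally forces $\varphi_k(0)=0$ for $1\le k\le N$). For the inductive step I would take $m=1$; since the three-term recurrence defining $(P_n)$ gives $c(n,1,n-1)=a_n$, $c(n,1,n)=b_n$, $c(n,1,n+1)=c_n$, the congruence becomes
$$
c_n\,\Phi_{n+1}\equiv f\,\Phi_n-a_n\Phi_{n-1}-b_n\Phi_n \pmod{t^{N+1}}.
$$
Inserting $\Phi_{n-1}\equiv P_{n-1}(f)$, $\Phi_n\equiv P_n(f)$ and using the same recurrence with $\lambda$ replaced by the polynomial $f$ collapses the right-hand side to $c_n P_{n+1}(f)$; dividing by $c_n>0$ completes the induction, and reading off the $k$-th derivatives at $0$ yields \eqref{genformr1}.

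The substantive steps are the two identifications with the linearization formula; everything else is bookkeeping. I expect the main obstacle to be purely organizational: keeping the truncation modulo $t^{N+1}$ consistent and verifying that the hypergroup identity is $o=0$, so that the boundary data $\varphi_k(0)$ behave as required. A cleaner, equivalent packaging of the converse is to invoke the classification of $A$-valued exponentials on a polynomial hypergroup — any such exponential $E$ satisfies $E(n)=P_n(E(1))$ — applied to $E(n)=\Phi_n$; the induction above is simply the proof of that classification specialized to the algebra $A$.
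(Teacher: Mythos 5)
Your proof is correct, and while your sufficiency half is the paper's argument in different clothing, your converse takes a genuinely different route. For sufficiency, the paper (which proves Theorem~\ref{r1} as the case $r=1$ of Theorem~\ref{r}) also substitutes $\lambda=f(t)$ into the linearization formula \eqref{eq:LinFormula} and applies Leibniz's rule at $t=0$; your Cauchy-product formulation in $A=\C[t]/(t^{N+1})$ is the same computation. For the converse, the paper inducts on the \emph{order} $k$: it sets $\psi_k(n)=\varphi_k(n)-(P_n\circ f)^{(k)}(0)$, disposes of $k=0$ by citing the classification of scalar exponentials on polynomial hypergroups ($\varphi_0(n)=P_n(\varphi_0(1))$, Theorem 2.2 of \cite{MR2978690}), verifies $\psi_k(0)=\psi_k(1)=0$, and shows that $\psi_k$ obeys the homogeneous recursion $\sum_{l=n-1}^{n+1}c(n,1,l)\psi_k(l)=\varphi_0(1)\psi_k(n)$, which forces $\psi_k=0$ since $c_n>0$. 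You instead fuse all orders into a single $A$-valued exponential $n\mapsto\Phi_n$ and induct on the hypergroup element $n$, settling $n=0$ by the idempotent argument in the local ring $A$, $n=1$ trivially via $\Phi_1=f=P_1(f)$ (record this explicitly as a second base case, since your step consumes both $\Phi_{n-1}$ and $\Phi_n$), and pushing forward with the three-term recurrence. The engine is identical in both proofs---convolve with $\delta_1$ and exploit $c_n>0$ to solve forward---but your packaging buys self-containedness: the scalar classification theorem the paper invokes for its base case is not needed, because your induction on $n$ reproves exactly that statement with an arbitrary commutative unital algebra in place of $\C$, and it also makes transparent the conceptual point that a rank-one moment sequence truncated at order $N$ \emph{is} an exponential valued in the truncated polynomial algebra. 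What the paper's order-by-order scheme buys is that it is written once for all ranks $r$; to cover Theorem~\ref{r} your argument would need $A$ replaced by $\C[t_1,\dots,t_r]$ modulo the ideal of terms of total degree exceeding $N$, which works but is a further (easy) adaptation.
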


The latter theorem says that moment functions can be reperesented as 
\[
 \varphi_{\alpha}(n)= \partial^{\alpha}(P_{n}\circ f)(t)\vert_{t=0}. 
\]
The derivative of the above composition can be computed with the aid of the multivariate Faá di Bruno formula, and using this, we get that 
\[
 \partial^{\alpha}(P_{n}\circ f)(t)\vert_{t=0}= 
 \sum_{\beta \leq \alpha} \partial^{\beta}P_{n}(f(t))\cdot B_{\alpha, \beta}(f(t), \ldots, \partial^{\alpha}f(t))\vert_{t=0}
\]
If we use that 
\[
 \partial^{\alpha}f(t)\vert_{t=0}= \varphi_{\alpha}(1), 
\]
then the above formula becomes simpler. For more details see the paper of A.~Schumann on arXiv: \href{https://arxiv.org/pdf/1903.03899.pdf}{https://arxiv.org/pdf/1903.03899.pdf}.

Our next theorem generalizes this result for moment function sequences of higher rank.

\begin{thm}\label{r}
	Let $X$ be a polynomial hypergroup generated by the sequence of polynomials $(P_n)_{n\in \N}$ and let $r,N$ be positive integers. The sequence of functions $\varphi_{\alpha}:\N\to\C$ $(\alpha\in \N^r, |\alpha|\leq N)$ is a moment function sequence of rank $r$ if and only if they have the form
	\begin{equation}\label{genformr} 
		\varphi_{\alpha}(n)=\partial^{\alpha}(P_n\circ f)(0)\hskip .5cm k=0,1,\dots,N; n\in \N,
	\end{equation}
	where $f:\R^r\to\C$ is defined for $t$ in $\R^r$ by
	\begin{equation}\label{rf}
		f(t)=\sum_{|\alpha|\leq N} \frac{\varphi_{\alpha}(1)}{\alpha!}t^{\alpha}\enskip\text{for}\enskip t \enskip\text{in}\enskip \R^r. 
	\end{equation}
\end{thm}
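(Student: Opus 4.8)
The plan is to collapse the whole system \eqref{Eq3} into a single multiplicative identity by passing to the (truncated) moment generating function. For each $n\in\N$ I set
$$
\Phi_n(t)=\sum_{|\alpha|\le N}\frac{\varphi_\alpha(n)}{\alpha!}\,t^\alpha,\qquad t\in\R^r,
$$
and I view $\Phi_n$ as an element of the finite–dimensional commutative $\C$–algebra $A_N=\C[t_1,\dots,t_r]/\mathfrak m^{N+1}$, where $\mathfrak m=(t_1,\dots,t_r)$. Multiplying \eqref{Eq3} by $t^\alpha/\alpha!$ and summing over $|\alpha|\le N$ turns the right–hand side into a Cauchy product, so that \eqref{Eq3} becomes \emph{equivalent} to
$$
\sum_k c(n,m,k)\,\Phi_k=\Phi_n\,\Phi_m\qquad\text{in }A_N
$$
for all $n,m\in\N$. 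The terms of degree exceeding $N$ produced by the product are annihilated in $A_N$ and carry no information, which is precisely why the truncated algebra is the natural arena: it encodes the finiteness constraint $|\alpha|\le N$ without any convergence issues. In this language $n\mapsto\Phi_n$ is an $A_N$–valued exponential of the hypergroup.

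Next I would identify all such exponentials. Reading off from the three–term recurrence defining $(P_n)$ that $\delta_n*\delta_1=a_n\delta_{n-1}+b_n\delta_n+c_n\delta_{n+1}$, the case $m=1$ of the displayed identity reads
$$
\Phi_1\,\Phi_n=a_n\Phi_{n-1}+b_n\Phi_n+c_n\Phi_{n+1}.
$$
Since $c_n\neq 0$, this is exactly the recurrence satisfied by $\lambda\mapsto P_n(\lambda)$ at $\lambda=\Phi_1$. Taking $n=m=o$ (the identity, here $o=0$) shows $\Phi_o$ is idempotent; as $A_N$ is local with residue field $\C$ its only idempotents are $0$ and $1$, so the normalization $\varphi_0(o)=1$ forces $\Phi_o=1=P_0(\Phi_1)$, while $\Phi_1=P_1(\Phi_1)$ trivially. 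An induction on $n$ — legitimate because each $c_n$ is an invertible scalar, so one may solve for $\Phi_{n+1}$ — then gives $\Phi_n=P_n(\Phi_1)$ in $A_N$. Finally $\Phi_1=\sum_{|\alpha|\le N}\tfrac{\varphi_\alpha(1)}{\alpha!}t^\alpha=f$, hence $\Phi_n=P_n\circ f$ modulo $\mathfrak m^{N+1}$, and applying $\partial^\alpha|_{t=0}$ for $|\alpha|\le N$ yields $\varphi_\alpha(n)=\partial^\alpha(P_n\circ f)(0)$, as claimed.

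For the converse I would run the argument backwards. Given $f$ by \eqref{rf}, define $\varphi_\alpha(n)=\partial^\alpha(P_n\circ f)(0)$, equivalently $\Phi_n=P_n\circ f$ in $A_N$. The linearization formula \eqref{eq:LinFormula} is an identity of polynomials in $\lambda$, so substituting $\lambda=f$ gives $\Phi_n\Phi_m=\sum_k c(n,m,k)\Phi_k$ on the nose; comparing coefficients of $t^\gamma/\gamma!$ for $|\gamma|\le N$ recovers \eqref{Eq3}, and $\varphi_0(o)=P_0(f(o))=1$ supplies the normalization.

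I expect the genuinely delicate points to be conceptual rather than computational. The Cauchy–product rearrangement and the coefficient extraction are routine; the crux is recognising that the constraint $|\alpha|\le N$ is handled cleanly by working in the Artinian local ring $A_N$ instead of with honest power series, and exploiting the local structure of $A_N$ to pin down the idempotent $\Phi_o$ and thereby the initial data of the recurrence. Once the truncation is set up correctly, the three–term recurrence carries the entire argument, so the main obstacle should be the bookkeeping of the quotient $A_N$ rather than any hard estimate.
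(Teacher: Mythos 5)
Your proof is correct, but it is organized quite differently from the paper's. You collapse the whole system \eqref{Eq3} into a single exponential equation $\Phi_n\Phi_m=\sum_k c(n,m,k)\Phi_k$ with values in the Artinian local algebra $A_N=\C[t_1,\dots,t_r]/\mathfrak m^{N+1}$, pin down $\Phi_0=1$ via the idempotent argument, identify $\Phi_1=f$, and then run a single induction on $n$ (using invertibility of $c_n$ in the three-term recurrence) to get $\Phi_n=P_n(\Phi_1)$; in effect you show that the scalar classification of exponentials on a polynomial hypergroup goes through verbatim with coefficients in $A_N$, and the higher-rank theorem falls out by reading off Taylor coefficients. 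The paper instead works coefficient-wise: sufficiency is proved by applying $\partial^{\alpha}$ (Leibniz rule) to the identity $P_m(f(t))P_n(f(t))=P_{m*n}(f(t))$ and evaluating at $t=0$; necessity is proved by setting $\psi_{\alpha}(n)=\varphi_{\alpha}(n)-\partial^{\alpha}(P_n\circ f)(0)$, invoking the scalar exponential theorem only for $\alpha=0$, and then performing an induction on $|\alpha|$ in which each $\psi_{\alpha}$ is shown to satisfy a second-order linear recursion $\sum_{l=n-1}^{n+1}c(n,1,l)\psi_{\alpha}(l)=\varphi_0(1)\psi_{\alpha}(n)$ with $\psi_{\alpha}(0)=\psi_{\alpha}(1)=0$, forcing $\psi_{\alpha}\equiv 0$. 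The computational engine is the same in both arguments — the three-term recurrence with $c_n>0$ plus the two initial values at $n=0$ and $n=1$ — but your formulation trades the paper's double induction (on $|\alpha|$, with a recursion in $n$ inside) for a single induction on $n$, at the cost of introducing the quotient algebra; this makes the proof shorter and more conceptual, and it generalizes readily to any setting where algebra-valued exponentials can be classified, whereas the paper's argument is more elementary and self-contained. One shared caveat: both proofs need the normalization $\varphi_0(o)=1$ (equivalently $\varphi_0\not\equiv 0$), which the rank-$r$ definition in the paper does not state explicitly — you invoke it explicitly in the idempotent step, while the paper uses it implicitly when asserting that $\varphi_0$ is an exponential; without it the identically zero sequence would be a counterexample to the stated equivalence, so this is a defect of the statement rather than of either proof.
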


\begin{proof}
We show the sufficiency first. We start with the identity
$$
P_m(\lambda)P_n(\lambda)=P_{m*n}(\lambda),
$$
where 
$$
P_{m*n}(\lambda)=\sum_{k=|m-n|}^{m+n} c(m,n,k)P_k(\lambda)
$$
and $\lambda$ is an arbitrary complex number. We substitute $\lambda=f(t)$, where $t$ is arbitrary in $\R^r$:
\begin{equation}\label{Eq1}
P_m(f(t))P_n(f(t))=P_{m*n}(f(t)),
\end{equation}
and apply $\partial^{\alpha}$ on both sides to get

\begin{equation}\label{Eq2}
\partial^{\alpha}(P_{m*n}(f(t))=\sum_{\beta\leq \alpha} \binom{\alpha}{\beta} \partial^{\beta}P_m(f(t))\cdot \partial^{\alpha-\beta}P_n(f(t)).
\end{equation}

Now we substitute $t=0$ to obtain
$$
\varphi_{\alpha}(m*n)=\partial^{\alpha}(P_{m*n}\circ f)(0)
$$
$$
=\sum_{\beta\leq \alpha}\binom{\alpha}{\beta}  \partial^{\beta}(P_{m}\circ f)(0)\cdot \partial^{\alpha-\beta}(P_{n}\circ f)(0)=\sum_{\beta\leq \alpha}\binom{\alpha}{\beta} \varphi_{\beta}(m)\cdot \varphi_{\alpha-\beta}(n),
$$
that is, the functions $\varphi_{\alpha}$ for $|\alpha|\leq N$ form a moment sequence of rank $r$, which proves the sufficiency part of the theorem.
\medskip

Now suppose that the sequence of functions $\varphi_{\alpha}:\N\to\C$ for $\alpha$ in  $\N^r$ with $|\alpha|\leq N$ is a moment function sequence of rank $r$ on $X$. We define the function $f$ as given in \eqref{rf} and the functions $\psi_{\alpha}:\N\to\C$ by
\begin{equation}\label{psi}
	\psi_{\alpha}(n)=\varphi_{\alpha}(n)-\partial^{\alpha}(P_n\circ f)(0)
\end{equation}
for $\alpha$ in $\N^r $ with $|\alpha|\leq N$ and for each $n=0,1,\dots$. We show that all functions $\psi_{\alpha}$ vanish identically. For $\alpha=0$ we have
$$
\psi_0(n)=\varphi_0(n)-P_n(\varphi_0(1))
$$
whenever $n=0,1,\dots$. As $\varphi_0$ is an exponential, by Theorem 2.2 in \cite{MR2978690}, it follows $\varphi_0(n)=P_n(\varphi_0(1))$, hence $\psi_0=0$.
\vskip.2cm

Now we show by induction on $|\alpha|$ that $\varphi_{\alpha}(0)=0$ whenever $|\alpha|>0$. Indeed, $\varphi_{\alpha}$ is a $\varphi_0$-sine function for each $\alpha$ with $|\alpha|=1$, hence $\varphi_{\alpha}(0)=0$. Now let $|\alpha|>1$, and assume that we have proved that $\varphi_{\beta}(0)=0$ for each $\beta$ with $0<|\beta|<|\alpha$. Then we substitute $x=y=0$ in \eqref{Eq3} to obtain
$$
\varphi_{\alpha}(0)=\varphi_{\alpha}(0)+\sum_{0<\beta<\alpha} \binom{\alpha}{\beta} \varphi_{\beta}(0)\varphi_{\alpha-\beta}(0)+\varphi_{\alpha}(0),
$$
that is
$$
\varphi_{\alpha}(0)=2\varphi_{\alpha}(0),
$$
which implies $\varphi_{\alpha}(0)=0$. This implies $\psi_{\alpha}(0)=0$ whenever $|\alpha|>0$. On the other hand, for $|\alpha|>0$ we have
$$
\psi_{\alpha}(1)=\varphi_{\alpha}(1)-\partial^{\alpha}(P_1\circ f)(0)=\varphi_{\alpha}(1)-\partial^{\alpha}f(0)=\varphi_{\alpha}(1)-\varphi_{\alpha}(1)=0.
$$

We show, by induction on $|\alpha|$, that $\psi_{\alpha}=0$ for each $\alpha$. This clearly holds for $\alpha=0$. Assuming that $\psi_{\beta}=0$ for $\beta<\alpha$ we
apply the linearization formula \eqref{Eq2} with $m=1$ and $t=0$ to get
\begin{equation*}
\partial^{\alpha}(P_{n*1}\circ f)(0)=\sum_{\beta\leq \alpha} \binom{\alpha}{\beta} \partial^{\alpha-\beta}(P_1\circ f)(0)\cdot \partial^{\beta}(P_n\circ f)(0),
\end{equation*}
or for $n\geq 1$
\begin{equation*}
	\sum_{l=n-1}^{n+1}c(n,1,l)\partial^{\alpha}(P_l\circ f)(0)=\sum_{\beta\leq \alpha} \binom{\alpha}{\beta} \varphi_{\alpha-\beta}(1)\cdot \partial^{\beta}(P_n\circ f)(0),
\end{equation*}
that is
\begin{equation}\label{Eq4}
	\sum_{\beta< \alpha} \binom{\alpha}{\beta} \varphi_{\alpha-\beta}(1)\cdot \partial^{\beta}(P_n\circ f)(0)=
\end{equation}	
$$	
	\sum_{l=n-1}^{n+1}c(n,1,l)\partial^{\alpha}(P_l\circ f)(0)-\varphi_0(1)  \partial^{\alpha}(P_n\circ f)(0).
$$
On the other hand, by the definition of the moment function sequence we have
$$
\sum_{\beta\leq \alpha} \binom{\alpha}{\beta} \varphi_{\beta}(n)\varphi_{\alpha-\beta}(1)=\varphi_{\alpha}(n*1),
$$
which can be rewritten as
\begin{equation*}
\sum_{\beta<\alpha} \binom{\alpha}{\beta}\varphi_{\alpha-\beta}(1)\cdot \varphi_{\beta}(n)=\varphi_{\alpha}(n*1)-\varphi_0(1)\varphi_{\alpha}(n),
\end{equation*}
or
\begin{equation}\label{Eq5}
	\sum_{\beta<\alpha} \binom{\alpha}{\beta}\varphi_{\alpha-\beta}(1)\cdot \varphi_{\beta}(n)=
\end{equation}	
$$	
	\sum_{l=n-1}^{n+1}c(n,1,l)\varphi_{\alpha}(l)-\varphi_0(1)\varphi_{\alpha}(n).
$$
We subtract \eqref{Eq4} from \eqref{Eq5}, then, by assumption, we have 
$$
	\sum_{l=n-1}^{n+1}c(n,1,l)\psi_{\alpha}(l)=\varphi_0(1)\psi_{\alpha}(n).
$$
This is a second order linear recursion for the sequence $n\mapsto \psi_{\alpha}(n)$ with  $\psi_{\alpha}(0)=\psi_{\alpha}(1)=0$, which implies $\psi_{\alpha}=0$. This holds for each $\alpha$ with $|\alpha|\leq N$, hence our theorem is proved.
\end{proof}

As an example we calculate the generalized moment functions of rank at most $2$ in the case $r=2$ on the Tchebyshev hypergroup. 
\begin{ex}
The generating functions are the Tchebyshev polynomials of the first kind:
$$
T_n(\lambda)=\cos (n \arccos \lambda)\hskip.2cm \text{for}\hskip.2cm n=0,1,\dots
$$

By formula \eqref{genformr} we have
$$
\varphi_{\alpha}(n)=\partial^{\alpha} (T_n\circ f)(0,0)
$$
whenever $\alpha=(0,0), (1,0), (0,1), (1,1), (2,0), (0,2)$, and $f$ is defined by
equation \eqref{rf}. We denote $\varphi_{0,0}(1)=T_1(\varphi_{0,0}(1))$ by $\lambda$, then we have

\begin{eqnarray*}
	\varphi_{0,0}(n)&=&T_n(\lambda)\\
	\varphi_{1,0}(n)&=&c_{1,0}T_n'(\lambda), \hskip.3cm \\
	\varphi_{0,1}(n)&=&c_{0,1}T_n'(\lambda), \hskip.3cm \\ 
	\varphi_{1,1}(n)&=&c_{1,0}c_{0,1}T_n''(\lambda)+c_{1,1}T_n'(\lambda)\\
	\varphi_{2,0}(n)&=&c_{1,0}^2T_n''(\lambda)+c_{2,0}T_n'(\lambda), \hskip.3cm \\
	\varphi_{0,2}(n)&=&c_{0,1}^2T_n''(\lambda)+c_{0,2}T_n'(\lambda).
\end{eqnarray*}
\end{ex}

\begin{ackn}
 The research of E.~Gselmann has partially been carried out with the help of the project 2019-2.1.11-T\'{E}T-2019-00049,
which has been implemented with the support provided from NRDI (National Research, Development
and Innovation Fund of Hungary), financed under the T\'{E}T funding scheme.
\\
The research of E.~Gselmann and L.~Sz\'{e}kelyhidi has been supported by the NRDI (National Research, Development
and Innovation Fund of Hungary) Grant no. K 134191.
\end{ackn}

\end{document}